\DeclareMathOperator{\Max}{Max}
\newtheorem{theorem}{Theorem}[section]
\newtheorem{definition}[theorem]{Definition}
\newtheorem{lemma}[theorem]{Lemma}
\newtheorem{proposition}[theorem]{Proposition}
\newtheorem{remark}[theorem]{Remark}
\newtheorem{example}[theorem]{Example}
\newtheorem{corollary}[theorem]{Corollary}
\title{The intuitionistic-like logic based on a poset}
\author{Ivan~Chajda and Helmut~L\"anger$^1$}
\date{}
\begin{document}

\footnotetext[1]{Support of the research of the first author by IGA, project P\v rF~2023~010, and of the second author by the Austrian Science Fund (FWF), project I~4579-N, entitled ``The many facets of orthomodularity'', is gratefully acknowledged.}

\maketitle

\begin{abstract}
The aim of the present paper is to show that the concept of intuitionistic logic based on a Heyting algebra can be generalized in such a way that it is formalized by means of a bounded poset. In this case it is not assumed that the poset is relatively pseudocomplemented. The considered logical connectives negation, implication or even conjunction are not operations in this poset but so-called operators since they assign to given entries not necessarily an element of the poset as a result but a subset of mutually incomparable elements with maximal possible truth values. We show that these operators for negation and implication can be characterized by several simple conditions formulated in the language of posets together with the operator of taking the lower cone. Moreover, our implication and conjunction form an adjoint pair. We call these connectives ``unsharp'' or ``inexact'' in accordance with the existing literature. We also introduce the concept of a deductive system of a bounded poset with implication and prove that it induces an equivalence relation satisfying a certain substitution property with respect to implication. Moreover, the restriction of this equivalence on the base set is uniquely determined by its kernel, i.e.\ the class containing the top element.
\end{abstract}

{\bf AMS Subject Classification:} 06A11, 06D15, 06D20, 03G25, 03B22

{\bf Keywords:} Bounded poset, logic based on a poset, unsharp negation, unsharp implication, adjoint operators, Modus Ponens, deductive system, equivalence relation induced by a deductive system

\section{Introduction}

Intuitionistic logic was algebraically formalized by means of relatively pseudocomplemented semilattices, see \cite{B08}, \cite{B13} and \cite H -- \cite{M70}. If such a semilattice is even a lattice, it is called a Heyting algebra, see \cite H and \cite{M70}. It is well-known that every relatively pseudocomplemented lattice is distributive. The concept of relatively pseudocomplemented lattices was extended by the first author to non-distributive lattices in \cite{C03} under the name sectional pseudocomplementation. It was further extended also for posets in \cite{CLP}. Thus a kind of intuitionistic logic based on sectionally pseudocomplemented lattices was realized.

However, there exist logics based on bounded posets, e.g.\ the logic of quantum mechanics, see e.g.\ \cite{CL22b}, \cite{CL23}, \cite{GG} and \cite{PP}. In particular, orthomodular posets on which the logic of quantum mechanics is based are thoroughly studied in \cite{CL22b}, \cite{CL23}, \cite{Fi} and \cite{PP}.

In order to formalize a logic based on a poset, there are two possible ways how to solve the problem that the operations meet and join need not be defined everywhere. One method is to consider these operations as partial only and then the logical connectives formalized by them and by partial operations derived from them are also only partial. The disadvantage of this approach is that in some cases, such a logic cannot answer the question what is a logical conclusion of some reasoning (or sequential derivation). Hence we prefer another approach, namely we consider so-called operators instead of operations which assign to given entries not necessarily an element as the result, but a certain subset of elements with maximal values. We work with such results of logical connectives which are subsets, but their elements are mutually incomparable. It means that one cannot prefer one element of this set with respect to the other elements. Hence such a logic based on a poset gets an answer concerning logical derivation in each case, but the result may be ``inexact'' or ``unsharp'', see e.g.\ \cite{CLa} or \cite{GG}. We suppose that if an exact logical derivation is impossible from the reasons mentioned above, it is better to have an unsharp result than none. In our opinion unsharp reasoning is an alternative to multiple-valued reasoning which is now generally accepted though it was not accepted by all specialists at first.

In their recent paper \cite{CLa} the authors showed that a certain intuitionistic-like logic can be derived based on arbitrary meet-semilattices satisfying the Ascending Chain Condition (ACC) regardless whether there exist (relative) pseudocomplements or not. We can ask if similar logics may be derived also by means of arbitrary posets with $0$ satisfying the ACC. Within meet-semilattices the logical connective conjunction is usually formalized by the meet operation. Then the unsharp implication as introduced in \cite{CLa} forms an adjoint operator to conjunction. In the case of a poset we must find another operator formalizing conjunction. In \cite{GG} so-called unsharp properties of formal logics were used. In \cite{CLa} we considered unsharpness of implication as well as of negation. This means that for given propositions $p$ and $q$ the results of the implication $p\rightarrow q$ and of the negation $\neg p$, respectively, need not be elements of the corresponding meet-semilattice $\mathbf L$, but may be non-empty subsets of it consisting of mutually incomparable maximal elements. When proceeding from meet-semilattices to posets, we will apply this principle again for the connective conjunction in such a way that implication and conjunction will still be connected by a certain kind of adjointness, see e.g.\ \cite{CL22a}. Due to this fact, in such logics we still have the derivation rule Modus Ponens.

\section{Preliminaries}

In the following we identify singletons with their unique element, i.e.\ we will write $x$ instead of $\{x\}$. Moreover, all posets considered in the sequel are assumed to satisfy the Ascending Chain Condition which we will abbreviate by ACC. This implies that every element lies under a maximal one. Of course, every finite poset satisfies the ACC.

In the sequel we will use the following notation: Let $\mathbf P=(P,\leq)$ be a poset, $a,b,c\in P$ and $A,B$ be non-empty subsets of $P$.
\begin{align*}
      \Max A & :=\text{ set of all maximal elements of }A, \\
      L(a,b) & :=\{x\in P\mid x\leq a,b\}, \\
\Lambda(A,B) & :=\bigcup_{x\in A,y\in B}L(x,y), \\	
     A\leq B & \text{ if }x\leq y\text{ for all }x\in A\text{ and all }y\in B, \\
    A\leq_1B & \text{ if for every }x\in A\text{ there exists some }y\in B\text{ with }x\leq y, \\
       A=_1B & \text{ if both }A\leq_1B\text{ and }B\leq_1A.
\end{align*}
The relations $\leq_1$ and $=_1$ are a quasiorder relation on $2^P$ and an equivalence relation on $2^P$, respectively. It is easy to see that $A\leq_1\Max B$ provided $A\subseteq B$ and that $A\leq_1b$ is equivalent to $A\leq b$.

If a poset $\mathbf P$ has a bottom and a top element, we will denote them by $0$ and $1$, respectively, and we will express this fact by writing $\mathbf P=(P,\leq,0,1)$. Such a poset is called {\em bounded}.

The element $c$ is called the {\em relative pseudocomplement} of $a$ with respect to $b$, formally $c=a*b$, if $c$ is the greatest element $x$ of $P$ satisfying $L(a,x)\leq b$. If $\mathbf P=(P,\leq,0)$ then the relative pseudocomplement $a*0$ of $a$ with respect to $0$ is denoted by $a^*$ and called the {\em pseudocomplement} of $a$, see e.g.\ \cite{C12} or \cite{Fr}. Especially, we have $L(a,a^*)=0$.

\section{Negation derived in posets}

Consider a bounded poset $\mathbf P=(P,\leq,0)$ satisfying the ACC. For $a\in P$ we define
\[
a^0:=\Max\{x\in P\mid L(a,x)=0\}.
\]
Clearly. $a^0$ need not be an element of $P$, but it is a non-empty subset of $P$ since $L(a,0)=0$. From now on, we will call $a^0$ the {\em unsharp negation} of $a$. Of course, if the pseudocomplement $a^*$ of $a$ exists then $a^0=a^*$.

We extend this concept to subsets of $P$ as follows: If $A$ is a non-empty subset of $P$ then
\[
A^0:=\Max\{x\in P\mid L(x,y)=0\text{ for all }y\in A\}.
\]
We are going to show that the negation $^0$ defined in this way shares several properties with the negation in intuitionistic logic.

\begin{theorem}\label{th1}
Let $\mathbf P=(P,\leq,0,1)$ be a bounded poset satisfying the {\rm ACC}, $a,b\in P$ and $A,B$ non-empty subsets of $P$. Then the following holds:
\begin{enumerate}[{\rm(i)}]
\item $A^0$ is an antichain, in particular $a^0$ is an antichain,
\item $0^0=1$ and $1^0=0$,
\item $L(x,y)=0$ for all $x\in A$ and all $y\in A^0$, in particular $L(a,y)=0$ for all $y\in a^0$,
\item $A\leq_1B$ implies $B^0\leq_1A^0$, in particular $a\leq b$ implies $b^0\leq a^0$,
\item $A^0=_1B^0$ implies $A^0=B^0$, in particular $a^0=_1b^0$ implies $a^0=b^0$; $A^0=_1b$ implies $A^0=b$, in particular $a^0=_1b$ implies $a^0=b$,
\item $A\leq_1A^{00}$, in particular $a\leq_1a^{00}$; $A^{000}=A^0$, in particular $a^{000}=a^0$,
\item $\Lambda\Big(a,\big(L(a,b)\big)^0\Big)=_1\Lambda(a,b^0)$.
\end{enumerate}
\end{theorem}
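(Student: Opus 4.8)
The identity to be established is the poset analogue of the Heyting-algebra law $a\wedge\neg(a\wedge b)=a\wedge\neg b$, so the plan is to prove the two directions $\Lambda(a,b^0)\leq_1\Lambda\big(a,(L(a,b))^0\big)$ and $\Lambda\big(a,(L(a,b))^0\big)\leq_1\Lambda(a,b^0)$ separately; in fact I expect to obtain the stronger set-theoretic equality, from which $=_1$ is immediate. Throughout I would rely on the preliminary observation that $A\leq_1\Max B$ whenever $A\subseteq B$, which is exactly the tool that converts ``$u$ satisfies the defining condition of a negation'' into ``$u$ lies below a member of that negation'' (this is where the ACC enters).

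For $\Lambda(a,b^0)\leq_1\Lambda\big(a,(L(a,b))^0\big)$ I would start from an arbitrary $u\in\Lambda(a,b^0)$, so $u\leq a$ and $u\leq y$ for some $y\in b^0$, whence $L(b,y)=0$. The key computation is to check that $L(u,w)=0$ for every $w\in L(a,b)$: any $t\in L(u,w)$ satisfies $t\leq u\leq y$ and $t\leq w\leq b$, so $t\in L(b,y)=0$. Thus $u$ lies in the set $\{z\mid L(z,w)=0\text{ for all }w\in L(a,b)\}$ whose maximal elements form $(L(a,b))^0$, and by the cited observation $u\leq v$ for some $v\in(L(a,b))^0$; then $u\in L(a,v)\subseteq\Lambda\big(a,(L(a,b))^0\big)$, so $u$ itself serves as its own witness for $\leq_1$.

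For the reverse direction I would take $u\in\Lambda\big(a,(L(a,b))^0\big)$, so $u\leq a$ and $u\leq z$ for some $z\in(L(a,b))^0$, i.e.\ $L(z,w)=0$ for all $w\in L(a,b)$. Here the crucial step is to verify $L(b,u)=0$: any $t\in L(b,u)$ satisfies $t\leq u\leq a$ together with $t\leq b$, so $t\in L(a,b)$, and simultaneously $t\leq u\leq z$, so $t\in L(z,t)$; but $t\in L(a,b)$ forces $L(z,t)=0$ and hence $t=0$. Consequently $u$ lies in $\{z'\mid L(b,z')=0\}$, whose maximal elements are $b^0$, and the same lifting argument yields $v\in b^0$ with $u\leq v$ and $u\in L(a,v)\subseteq\Lambda(a,b^0)$.

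The element-chasing inside the lower cones is routine; the step I expect to be the genuine obstacle is the reverse direction, specifically the observation that a common lower bound $t$ of $b$ and $u$ is automatically an element of $L(a,b)$ (because $u\leq a$) while at the same time lying below $z$, so that the defining property of $z\in(L(a,b))^0$ can be applied to $t$ itself. Getting this point right—$t$ simultaneously playing the role of a member of $L(a,b)$ and of a lower bound tested against $z$—is what makes the direction involving $(L(a,b))^0$ go through, and it is also where one must be careful to invoke the ACC-based lifting $A\leq_1\Max B$ only after the relevant defining condition has been established.
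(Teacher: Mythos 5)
Your proposal addresses only item (vii) of the theorem; items (i)--(vi) are never mentioned. That is the substantive gap: the statement has seven assertions, and while (i)--(iii) are immediate from the definition of $^0$ and (iv) is a one-line inclusion of defining sets, items (v) and (vi) genuinely require arguments you have not supplied. Item (v) rests on the antichain property (i): if $a\in A^0$ is dominated through $=_1$ by some element which is in turn dominated by an element of $A^0$, the two endpoints of the chain lie in an antichain and must coincide. Item (vi) ($A\leq_1A^{00}$ and $A^{000}=A^0$) is then obtained by combining the antitonicity (iv) with the rigidity (v). None of this can be extracted from what you wrote, so as a proof of the full theorem the proposal is incomplete.

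For item (vii) itself your argument is correct, and in fact proves more than asked: in both directions you exhibit the element $u$ itself as a member of the other side, so you obtain the set-theoretic equality $\Lambda(a,b^0)=\Lambda\big(a,(L(a,b))^0\big)$, which is stronger than $=_1$. Your route also differs from the paper's. The paper proves the inclusion $\Lambda\big(a,(L(a,b))^0\big)\leq_1\Lambda(a,b^0)$ by a chain of implications (essentially your second direction), and then gets the converse $\Lambda(a,b^0)\leq_1\Lambda\big(a,(L(a,b))^0\big)$ for free from the antitonicity statement (iv) applied to $L(a,b)\leq b$; you instead element-chase both directions from scratch, which makes your proof of (vii) self-contained --- a relevant virtue here, since you did not prove (iv). Both of your key computations are sound: $L(u,w)=0$ for all $w\in L(a,b)$ in one direction, and, in the other, the observation that a common lower bound $t$ of $b$ and $u$ lies in $L(a,b)$ and below $z$, so that $t\in L(z,t)=0$ forces $t=0$. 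Your care to invoke the ACC-based lifting ($A\subseteq B$ implies $A\leq_1\Max B$) only after verifying the defining condition is exactly the right discipline; the same device appears implicitly in the paper's steps (2)$\Rightarrow$(3). In short: correct and slightly stronger on (vii), but six of the seven items are missing.
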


\begin{proof}
\
\begin{enumerate}
\item[(i)] -- (iii) follow directly from the definition of $^0$.
\item[(iv)] If $A\leq_1B$ then
\[
\{x\in P\mid L(x,y)=0\text{ for all }y\in B\}\subseteq\{x\in P\mid L(x,y)=0\text{ for all }y\in A\}.
\]
\item[(v)] Assume $a\in A^0=_1B^0$. Because of $A^0\leq_1B^0$ there exists some $b\in B^0$ with $a\leq b$, and because of $B^0\leq_1A^0$ there exists some $c\in A^0$ with $b\leq c$. Together we obtain $a\leq b\leq c$ and hence $a\leq c$. Since $a$ and $c$ belong to the antichain $A^0$ we conclude $a=c$ and therefore $a=b\in B^0$. This shows $A^0\subseteq B^0$. Interchanging the roles of $A^0$ and $b^0$ yields $B^0\subseteq A^0$. Together we obtain $A^0=B^0$. Now assume $a\in A^0=_1b$. Because of $A^0\leq_1b$ we have $a\leq b$, and because of $b\leq_1A^0$ there exists some $c\in A^0$ with $b\leq c$. Together we obtain $a\leq b\leq c$ and hence $a\leq c$. Since $a$ and $c$ belong to the antichain $A^0$ we conclude $a=c$ and therefore $a=b$. This shows $A^0=b$.
\item[(vi)] We have $A\subseteq\{x\in P\mid L(x,y)=0\text{ for all }y\in A^0\}$ and hence $A\leq_1A^{00}$. Replacing $A$ by $A^0$ we get $A^0\leq_1A^{000}$. From $A\leq_1A^{00}$ and (iv) we obtain $a^{000}\leq_1A^0$. Together we have $A^{000}=_1A^0$ whence $A^{000}=A^0$ according to (v).
\item[(vii)] Any of the following statements implies the next one:
\begin{enumerate}[(1)]
\item $L(x,y)=0$ for all $x\in\big(L(a,b)\big)^0$ and all $y\in L(a,b)$,
\item $L(b,y)=0$ for all $x\in\big(L(a,b)\big)^0$ and all $y\in L(a,x)$,
\item $\Lambda\Big(a,\big(L(a,b)\big)^0\Big)\leq_1b^0$,
\item $\Lambda\Big(a,\big(L(a,b)\big)^0\Big)\leq_1\Lambda(a,b^0)$.
\end{enumerate}
That (1) implies (2) can be seen as follows: If $x\in\big(L(a,b)\big)^0$, $y\in L(a,x)$ and $c\in L(b,y)$ then $c\in L(a,b)$ and $c\leq x$ and therefore $L(x,c)=0$ by (1) whence $L(c)=L(x,c)=0$, i.e. $c=0$. From $L(a,b)\leq b$ we conclude $b^0\leq_1\big(L(a,b)\big)^0$ according to (iv) and hence
\[
\Lambda(a,b^0)\leq_1\Lambda\Big(a,\big(L(a,b)\big)^0\Big).
\]
\end{enumerate}
\end{proof}

\begin{example}
Consider the bounded poset $\mathbf P=(P,\leq,0,1)$ visualized in Fig.~1:

\vspace*{-3mm}

\begin{center}
\setlength{\unitlength}{7mm}
\begin{picture}(6,8)
\put(3,1){\circle*{.3}}
\put(1,3){\circle*{.3}}
\put(3,3){\circle*{.3}}
\put(5,3){\circle*{.3}}
\put(1,5){\circle*{.3}}
\put(3,5){\circle*{.3}}
\put(5,5){\circle*{.3}}
\put(3,7){\circle*{.3}}
\put(3,1){\line(-1,1)2}
\put(3,1){\line(0,1)6}
\put(3,1){\line(1,1)2}
\put(1,3){\line(0,1)2}
\put(1,3){\line(1,1)2}
\put(3,3){\line(-1,1)2}
\put(3,3){\line(1,1)2}
\put(5,3){\line(-1,1)2}
\put(5,3){\line(0,1)2}
\put(3,7){\line(-1,-1)2}
\put(3,7){\line(1,-1)2}
\put(2.85,.3){$0$}
\put(.35,2.85){$a$}
\put(3.4,2.85){$b$}
\put(5.4,2.85){$c$}
\put(.35,4.85){$d$}
\put(3.4,4.85){$e$}
\put(5.4,4.85){$f$}
\put(2.85,7.4){$1$}
\put(2.2,-.75){{\rm Fig.~1}}
\put(1.1,-1.75){Bounded poset}
\end{picture}
\end{center}

\vspace*{8mm}

We have
\[
\begin{array}{l|l|l|l|l|l|l|l|l}
x      & 0 & a & b  & c & d & e & f & 1 \\
\hline
x^0    & 1 & f & ac & d & c & 0 & a & 0 \\
\hline
x^{00} & 0 & a & b  & c & d & 1 & f & 1
\end{array}
\]
{\rm(}Here and in the following we write $a_1\ldots a_n$ instead of $\{a_1,\ldots,a_n\}$.{\rm)} One can see that $x^{00}=x$ for all $x\in P\setminus\{e\}$ and $e^{00}=1\neq e$. But $e^{000}=1^0=0=e^0$ and hence $\mathbf P$ satisfies the identity $x^{000}\approx x^0$ in accordance with {\rm(vi)} of Theorem~\ref{th1}.
\end{example}

The following example shows that (iv) of Theorem~\ref{th1} does not hold for $\leq$ instead of $\leq_1$.

\begin{example}
Consider the bounded poset $\mathbf P=(P,\leq,0,1)$ depicted in Fig.~2:

\vspace*{-3mm}

\begin{center}
\setlength{\unitlength}{7mm}
\begin{picture}(10,8)
\put(5,1){\circle*{.3}}
\put(1,3){\circle*{.3}}
\put(3,3){\circle*{.3}}
\put(5,3){\circle*{.3}}
\put(7,3){\circle*{.3}}
\put(9,3){\circle*{.3}}
\put(5,5){\circle*{.3}}
\put(5,7){\circle*{.3}}
\put(5,1){\line(-2,1)4}
\put(5,1){\line(-1,1)2}
\put(5,1){\line(0,1)6}
\put(5,1){\line(1,1)2}
\put(5,1){\line(2,1)4}
\put(5,5){\line(-1,-1)2}
\put(5,5){\line(1,-1)2}
\put(5,7){\line(-1,-1)4}
\put(5,7){\line(1,-1)4}
\put(4.85,.3){$0$}
\put(.35,2.85){$a$}
\put(2.35,2.85){$b$}
\put(5.4,2.85){$c$}
\put(7.4,2.85){$d$}
\put(9.4,2.85){$e$}
\put(5.4,4.85){$f$}
\put(4.85,7.4){$1$}
\put(4.2,-.75){{\rm Fig.~2}}
\put(3.1,-1.75){Bounded poset}
\end{picture}
\end{center}

\vspace*{8mm}

We have
\[
\begin{array}{l|l|l|l|l|l|l|l|l}
x      & 0 & a  & b    & c    & d    & e  & f  & 1 \\
\hline
x^0    & 1 & ef & acde & abde & abce & af & ae & 0 \\
\hline
x^{00} & 0 & a  & b    & c    & d    & e  & f  & 1
\end{array}
\]
and hence $c\leq f$ and $f^0\leq_1c^0$, but $f^0=\{a,e\}\not\leq\{a,b,d,e\}=c^0$.
\end{example}

The next question is whether the unary operator $^0$ can be characterized by means of simple conditions formulated in the language of posets. This is possible, see the following theorem.

\begin{theorem}\label{th2}
Let $(P,\leq,0,1)$ be a bounded poset satisfying the {\rm ACC} and $^0$ a unary operator on $P$. Then the following conditions {\rm(1)} and {\rm(2)} are equivalent:
\begin{enumerate}[{\rm(1)}]
\item $x^0=\Max\{y\in P\mid L(x,y)=0\}$ for all $x\in P$,
\item the operator $^0\colon2^P\rightarrow2^P$ {\rm(}restricted to $P${\rm)} satisfies the following conditions for all $x\in P$:
\begin{enumerate}[{\rm(P1)}]
\item $x^0$ is an antichain,
\item $L(x,y)=0$ for all $y\in x^0$,
\item $\Lambda\Big(x,\big(L(x,y)\big)^0\Big)=_1\Lambda(x,y^0)$.
\end{enumerate}
\end{enumerate}
\end{theorem}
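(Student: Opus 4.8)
The plan is to prove the two implications separately. The implication (1)$\Rightarrow$(2) is immediate: if $^0$ is given by the displayed formula, then (P1), (P2) and (P3) are exactly items (i), (iii) and (vii) of Theorem~\ref{th1}. So the whole content lies in (2)$\Rightarrow$(1). Throughout I would write $N(x):=\Max\{y\in P\mid L(x,y)=0\}$ for the canonical candidate and note that $N(x)$ is an antichain, being a set of maximal elements. The goal is to show that (P1)--(P3) force $x^0=N(x)$ for every $x$, and I would obtain this by first proving $x^0=_1N(x)$ and then upgrading $=_1$ to genuine equality using that both sides are antichains, exactly as in the argument of Theorem~\ref{th1}(v).

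For the inclusion $x^0\leq_1N(x)$ I would invoke (P2) together with the ACC: every $t\in x^0$ satisfies $L(x,t)=0$, hence lies in $\{z\mid L(x,z)=0\}$, and by the ACC it lies under a maximal element of that set, so $t\leq_1N(x)$. The reverse inclusion $N(x)\leq_1x^0$ is the heart of the matter and is where I expect the real work. The key observation I would extract from (P3) is the following \emph{derived fact}: for all $a,b\in P$, if $L(a,b)=0$ then $a\leq_1b^0$. To see this, specialise (P3) to the pair $(a,b)$: since $L(a,b)=\{0\}$, its negation is $\{0\}^0=1$ (because $L(z,0)=0$ for every $z$, so the top element is the unique maximal such $z$), and (P3) becomes
\[
L(a)=\Lambda(a,1)=_1\Lambda(a,b^0)=\bigcup_{t\in b^0}L(a,t).
\]
Reading off $a\in L(a)$ on the left, there must be $t\in b^0$ and $u\in L(a,t)$ with $a\leq u$; but $u\in L(a,t)$ forces $u\leq a$, so $u=a$ and thus $a\leq t$, proving $a\leq_1b^0$.

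With the derived fact in hand, the reverse inclusion follows by exploiting the symmetry of the hypothesis $L(a,b)=L(b,a)$. Given $w\in N(x)$ we have $L(x,w)=0$, hence $L(w,x)=0$, and applying the derived fact with $a=w$, $b=x$ yields $w\leq_1x^0$; thus $N(x)\leq_1x^0$. Combining the two inclusions gives $x^0=_1N(x)$, and since $x^0$ is an antichain by (P1) while $N(x)$ is an antichain by construction, the relation $=_1$ between them collapses to equality, which is (1). The main obstacle is locating the right specialisation of (P3) and realising that the symmetric condition $L(a,b)=0$ lets one swap the roles of $a$ and $b$; the only point demanding care is the evaluation $\big(L(x,y)\big)^0=1$ when $L(x,y)=0$, which simultaneously pins down the behaviour of $^0$ at the bottom and makes the degenerate case $x=0$ (where (P2) is vacuous) come out correctly.
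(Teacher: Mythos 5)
Your proposal is correct and follows essentially the same route as the paper: its proof of (2)$\Rightarrow$(1) likewise establishes that $L(x,y)=0$ is equivalent to $y\leq_1x^0$ (using (P3) for one direction and (P2) for the other) and then identifies $\Max\{y\in P\mid L(x,y)=0\}$ with $x^0$ via the antichain condition (P1), which is exactly your mutual-$\leq_1$-plus-antichain-rigidity argument in slightly different packaging. Your explicit evaluation $\big(L(x,y)\big)^0=1$ when $L(x,y)=0$ is precisely the step the paper performs silently in the chain $L(y,1)=\Lambda\big(y,(L(x,y))^0\big)$, so your handling of this delicate point matches (indeed makes more explicit) the paper's own argument.
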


\begin{proof}
$\text{}$ \\
(1) $\Rightarrow$ (2): \\
This follows from Theorem~\ref{th1}. \\
(2) $\Rightarrow$ (1): \\
If $L(x,y)=0$ then according to (P3) we have
\[
y=_1L(y,1)=\Lambda\Big(y,\big(L(x,y)\big)^0\Big)=\Lambda\Big(y,\big(L(y,x)\big)^0\Big)=_1\Lambda(y,x^0)\leq_1x^0
\]
and hence $y\leq_1x^0$. Conversely, if $y\leq_1x^0$ then according to (P2) we get
\[
L(x,y)\subseteq\Lambda(x,x^0)=0
\]
which implies $L(x,y)=0$. This shows that $L(x,y)=0$ is equivalent to $y\leq_1x^0$. We conclude
\[
\Max\{y\in P\mid L(x,y)=0\}=\Max\{y\in P\mid y\leq_1x^0\}=x^0.
\]
The last equality can be seen as follows. Let $z\in\Max\{y\in P\mid y\leq_1x^0\}$. Then $z\leq_1x^0$, i.e.\ there exists some $u\in x^0$ with $z\leq u$. We have $u\leq_1x^0$. Then $z<u$ would imply $z\notin\Max\{y\in P\mid y\leq_1x^0\}$, a contradiction. This shows $z=u\in x^0$. Conversely, assume $z\in x^0$. Then $z\leq_1x^0$. If $z\notin\Max\{y\in P\mid y\leq_1x^0\}$ then there would exist some $u\in P$ with $z<u\leq_1x^0$ and hence there would exist some $w\in x^0$ with $z<u\leq w$ contradicting (P1). This shows $z\in\Max\{y\in P\mid y\leq_1x^0\}$.
\end{proof}

\section{Unsharp implication and conjunction}

Let us recall that Brouwerion semilattices are relatively pseudocomplemented meet-semilattices, see e.g.\ \cite K and \cite{M55}. It is familiarly known that in logics based on Brouwerian semilattices or on Heyting algebras the relative pseudocomplement is considered as the logical connective implication, i.e.
\[
x\rightarrow y=x*y=\Max\{z\in S\mid x\wedge z\leq y\}.
\]
Of course, $\Max\{z\in S\mid x\wedge z\leq y\}$ is a singleton, thus $x*y$ is an element of $S$. In our case we will use formally the same definition, but the result $x\rightarrow y$ need not be an element of the poset $\mathbf P$ in question, but may be a subset of $P$ in general. However, the elements of $x\rightarrow y$ are mutually incomparable and of a maximal possible value. Hence one cannot pick up one of them to be the preferable element. Now we are going to define our main concept, i.e.\ the operator $\rightarrow$ which formalizes the logical connective implication. As mentioned above, for given entries $x$ and $y$ the result of $x\rightarrow y$ may be a subset of $P$. Due to this, if we combine this operator in several formulas, we must define its value also for entries which are subsets. We define:

Let $\mathbf P=(P,\leq,0,1)$ be a bounded poset satisfying the ACC, $a,b\in P$ and $A,B$ non-empty subsets of $P$. Then
\begin{enumerate}
\item[(I)]
\begin{tabular}l
$a\rightarrow b:=\Max\{x\in P\mid L(a,x)\leq b\}$, \\
$A\rightarrow B:=\Max\{y\in P\mid L(x,y)\leq_1B\text{ for all }x\in A\}$.
\end{tabular}
\end{enumerate}
\begin{enumerate}
\item[(C)]
\begin{tabular}l
$a\odot b:=\Max L(a,b)$, \\
$A\odot B:=\Max \Lambda(A,B)$.
\end{tabular}
\end{enumerate}
Of course, if $\mathbf P$ is a lattice then $a\odot b=a\wedge b$.

It is evident that $a\rightarrow0=a^0$ as usual in intuitionistic logic.

In order to estimate how reasonable our definition of implication is we can verify its relationship with conjunction. These two unsharp logical connectives are related as follows:
\begin{enumerate}
\item[(AD)] $x\odot y\leq z$ if and only if $x\leq_1y\rightarrow z$
\end{enumerate}
or, even more general,
\[
A\odot y\leq z\text{ if and only if }A\leq_1y\rightarrow z
\]
for every non-empty subset $A$ of $P$. This is a variant of adjointness of the operators $\odot$ and $\rightarrow$. Hence the connectives introduced before seem to be sound. We list some of their properties. It is evident that $\odot$ is commutative and $x\odot1\approx1\odot x\approx x$.

Moreover, from $x\rightarrow y=x\rightarrow y$ we infer by (AD)
\[
(x\rightarrow y)\odot x\leq_1y
\]
which is a kind of the Modus Ponens derivation rule in intuitionistic logic. Namely, it says that the truth value of the proposition $y$ cannot be less than the truth values of the propositions $x$ and $x\rightarrow y$ despite the fact that $x\rightarrow y$ may consist of a number of propositions.

For the operator $\rightarrow$ we prove the following result.

\begin{theorem}\label{th3}
Let $\mathbf P=(P,\leq,0,1)$ be a bounded poset satisfying the {\rm ACC}, $a,b,c\in P$ and $A,B$ non-empty subsets of $P$. Then the following holds:
\begin{enumerate}[{\rm(i)}]
\item $a\rightarrow b$ is an antichain,
\item $b\leq_1a\rightarrow b$,
\item $a\leq_1(a\rightarrow b)\rightarrow b$,
\item $a\leq b$ implies $c\rightarrow a\leq_1c\rightarrow b$ and $b\rightarrow c\leq_1a\rightarrow c$,
\item $\Lambda(a,a\rightarrow b)=L(a,b)$,
\item $\Lambda(a\rightarrow b,b)=_1b$,
\item $1\rightarrow A=A$, especially $1\rightarrow a=a$
\item $A\rightarrow B=1$ if and only if $A\leq_1B$, especially $a\rightarrow b=1$ if and only if $a\leq b$,
\item $A\odot b\leq c$ if and only if $A\leq_1b\rightarrow c$,
\item $a\odot(a\rightarrow b)=a\odot b$.
\end{enumerate}
\end{theorem}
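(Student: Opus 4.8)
The plan is to record two small facts first and then dispatch the ten items mostly by direct computation, reserving the adjointness (ix) as the substantive step. Both facts are consequences of the ACC together with the observations in the Preliminaries: first, for any subset $S$ of $P$ one has $\Max S\leq c$ iff $S\leq c$, because every element of $S$ lies under a maximal one; second, if $S$ is a down-set then $x\leq_1\Max S$ iff $x\in S$. Combined with the symmetry $L(x,y)=L(y,x)$, with the Preliminaries fact that $A\leq_1\Max B$ whenever $A\subseteq B$, and with the equivalence of $A\leq_1 b$ and $A\leq b$, these reduce almost everything to manipulations of lower cones.

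The structural items I would treat as follows. (i) is immediate since $\Max$ of any set is an antichain. For (ii), note $L(a,b)\leq b$, so $b$ belongs to $\{x\mid L(a,x)\leq b\}$, and the ACC lifts $b$ to a maximal such element, giving $b\leq_1 a\rightarrow b$. For (iii), every $x\in a\rightarrow b$ satisfies $L(a,x)\leq b$ by definition; read as $L(x,a)\leq b$ for all such $x$, this says exactly that $a$ lies in the defining set of $(a\rightarrow b)\rightarrow b$, whence $a\leq_1(a\rightarrow b)\rightarrow b$. For (iv) I would use the inclusions $\{x\mid L(c,x)\leq a\}\subseteq\{x\mid L(c,x)\leq b\}$ and $\{x\mid L(b,x)\leq c\}\subseteq\{x\mid L(a,x)\leq c\}$ for $a\leq b$ (the latter because $L(a,x)\subseteq L(b,x)$), together with $A\leq_1\Max B$ for $A\subseteq B$. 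For (v) I would prove both inclusions of $\Lambda(a,a\rightarrow b)=L(a,b)$: the inclusion $\subseteq$ holds because each $y\in a\rightarrow b$ has $L(a,y)\leq b$, hence $L(a,y)\subseteq L(a,b)$; the inclusion $\supseteq$ uses (ii), since $b\leq y$ for some $y\in a\rightarrow b$ forces $L(a,b)\subseteq L(a,y)$. Item (vi) follows the same pattern, $\Lambda(a\rightarrow b,b)\leq b$ being clear and (ii) supplying an $x$ with $b\in L(x,b)$; and (x) is then immediate from (v), since $a\odot(a\rightarrow b)=\Max\Lambda(a,a\rightarrow b)=\Max L(a,b)=a\odot b$.

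Items (vii) and (viii) are computations with the top element. Since $L(1,y)=\{t\mid t\leq y\}$, the condition $L(1,y)\leq_1 A$ is equivalent to $y\leq_1 A$, so $1\rightarrow A=\Max\{y\mid y\leq_1 A\}=\Max A$, which is $A$ when $A$ is an antichain and in particular gives $1\rightarrow a=a$. For (viii), $A\rightarrow B=1$ means $1$ lies in the defining set, i.e.\ $L(x,1)\leq_1 B$ for all $x\in A$; as $L(x,1)\leq_1 B$ is equivalent to $x\leq_1 B$, this says precisely $A\leq_1 B$, and conversely if $A\leq_1 B$ then $1$ is in the set and, being the top, is its only maximal element.

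\textbf{The main obstacle} is (ix), the adjointness $A\odot b\leq c\iff A\leq_1 b\rightarrow c$, which also underlies the Modus Ponens remark. I would argue in two stages. On the left, $A\odot b=\Max\Lambda(A,b)$, so by the first recorded fact $A\odot b\leq c$ is equivalent to $\Lambda(A,b)\leq c$, that is, to $L(x,b)\leq c$ for every $x\in A$. On the right, the key equivalence is $x\leq_1 b\rightarrow c\iff L(b,x)\leq c$: this is the second recorded fact applied to the down-set $S=\{z\mid L(b,z)\leq c\}$, which is downward closed since $z'\leq z$ gives $L(b,z')\subseteq L(b,z)$, noting $b\rightarrow c=\Max S$. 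Thus $A\leq_1 b\rightarrow c$ unwinds to $L(b,x)\leq c$ for all $x\in A$, and by symmetry of $L$ this matches the left-hand condition. The care needed is exactly in the passage between a $\leq_1$-statement about maximal elements and membership in the defining set, where the ACC and the down-set property are indispensable; once this equivalence is isolated, the remaining items fall out as above.
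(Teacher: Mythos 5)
Your proposal is correct and follows essentially the same route as the paper: direct manipulation of lower cones, using the ACC to lift elements to maximal ones and the down-set/antichain facts to pass between $\leq_1$-statements about $\Max$ and membership in the defining sets. The only difference is one of exposition -- you isolate these two facts explicitly and use them to give a full argument for (ix) (and the $\Max A$ caveat in (vii)), which the paper dismisses as following ``directly from the definition.''
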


\begin{proof}
\
\begin{enumerate}
\item[(i)], (ii), (vii) and (ix) follow directly from the definition of $\rightarrow$.
\item[(iii)] follows from $L(x,a)=L(a,x)\leq b$ for all $x\in a\rightarrow b$
\item[(iv)] follows since $a\leq b$ implies
\begin{align*}
\{x\in P\mid L(c,x)\leq a\}\subseteq\{x\in P\mid L(c,x)\leq b\}, \\
\{x\in P\mid L(b,x)\leq c\}\subseteq\{x\in P\mid L(a,x)\leq c\}.
\end{align*}
\item[(v)] If $c\in\Lambda(a,a\rightarrow b)$ then there exists some $d\in a\rightarrow b$ with $c\in L(a,d)$. Since $L(a,d)\leq b$ we have $c\in L(a,b)$. Conversely, assume $c\in L(a,b)$. Since $L(a,b)\leq b$ there exists some $d\in a\rightarrow b$ with $b\leq d$. Now $c\in L(a,d)\subseteq\Lambda(a,a\rightarrow b)$.
\item[(vi)] Since $L(a,b)\leq b$ there exists some $c\in a\rightarrow b$ with $b\leq c$. Now $b\in L(c,b)\subseteq\Lambda(a\rightarrow b,b)$ showing $b\leq_1\Lambda(a\rightarrow b,b)$. Of course, $\Lambda(a\rightarrow b,b)\leq_1b$.
\item[(viii)] The following are equivalent: $A\rightarrow B=1$, $L(x,1)\leq_1B$ for all $x\in A$, $A\leq_1B$.
\item[(x)] According to (v) we have
\[
a\odot(a\rightarrow b)=\Max\Lambda(a,a\rightarrow b)=\Max L(a,b)=a\odot b.
\]
\end{enumerate}
\end{proof}

We can see that our operator $\rightarrow$ shares a lot of properties with the connective implication in intuitionistic logic based on a Heyting algebra. In particular, our $\rightarrow$ is antitone in the first and monotone in the second entry.

\begin{example}
Consider the bounded poset $(\{0,a,b,c,d,e,1\},\leq,0,1)$ visualized in Fig.~3:

\vspace*{-3mm}

\begin{center}
\setlength{\unitlength}{7mm}
\begin{picture}(7,8)
\put(4,1){\circle*{.3}}
\put(4,3){\circle*{.3}}
\put(6,3){\circle*{.3}}
\put(1,4){\circle*{.3}}
\put(4,5){\circle*{.3}}
\put(6,5){\circle*{.3}}
\put(4,7){\circle*{.3}}
\put(4,1){\line(-1,1)3}
\put(4,1){\line(0,1)6}
\put(4,1){\line(1,1)2}
\put(4,3){\line(1,1)2}
\put(6,3){\line(-1,1)2}
\put(6,3){\line(0,1)2}
\put(4,7){\line(-1,-1)3}
\put(4,7){\line(1,-1)2}
\put(3.85,.3){$0$}
\put(.35,3.85){$c$}
\put(3.35,2.85){$a$}
\put(6.4,2.85){$b$}
\put(3.35,4.85){$d$}
\put(6.4,4.85){$e$}
\put(3.85,7.4){$1$}
\put(3.2,-.75){{\rm Fig.~3}}
\put(2.1,-1.75){Bounded poset}
\end{picture}
\end{center}

\vspace*{8mm}

The operator tables of $\rightarrow$ and $\odot$ are as follows:
\[
\begin{array}{l|l|l|l|l|l|l|l}
\rightarrow & 0  & a  & b  & c  & d  & e  & 1 \\
\hline
0           & 1  & 1  & 1  & 1  & 1  & 1  & 1 \\
\hline
a           & bc & 1  & bc & bc & 1  & 1  & 1 \\
\hline
b           & ac & ac & 1  & ac & 1  & 1  & 1 \\
\hline
c           & de & de & de & 1  & de & de & 1 \\
\hline
d           & c  & ac & bc & c  & 1  & ce & 1 \\
\hline
e           & c  & ac & bc & c  & cd & 1  & 1 \\
\hline
1           & 0  & a  & b  & c  & d  & e  & 1
\end{array}
\hspace{2cm}
\begin{array}{l|l|l|l|l|l|l|l}
\odot & 0 & a & b & c & d  & e  & 1 \\
\hline
0     & 0 & 0 & 0 & 0 & 0  & 0  & 0 \\
\hline
a     & 0 & a & 0 & 0 & a  & a  & a \\
\hline
b     & 0 & 0 & b & 0 & b  & b  & b \\
\hline
c     & 0 & 0 & 0 & c & 0  & 0  & c \\
\hline
d     & 0 & a & b & 0 & d  & ab & d \\
\hline
e     & 0 & a & b & 0 & ab & e  & e \\
\hline
1     & 0 & a & b & c & d  & e  & 1
\end{array}
\]
\end{example}

Now we characterize the binary operator $\rightarrow$ in a similar way as it was done for the unary operator $^0$ in Theorem~\ref{th2}.

\begin{theorem}
Let $\mathbf P=(P,\leq,0,1)$ be a bounded poset satisfying the {\rm ACC} and $\rightarrow$ a binary operator on $P$. Then the following conditions {\rm(1)} and {\rm(2)} are equivalent:
\begin{enumerate}[{\rm(1)}]
\item $x\rightarrow y=\Max\{z\in P\mid L(x,z)\leq y\}$ for all $x,y\in P$,
\item the operator $\rightarrow\colon2^P\times2^P\rightarrow2^P$ {\rm(}restricted to $P\times P${\rm)} satisfies the following conditions for all $x,y,z\in P$:
\begin{enumerate}[{\rm(R1)}]
\item $x\rightarrow y$ is an antichain,
\item $L(x,z)\leq y$ implies $z\leq_1x\rightarrow y$,
\item $\Lambda(x,x\rightarrow y)=L(x,y)$.
\end{enumerate}
\end{enumerate}
\end{theorem}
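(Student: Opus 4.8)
The plan is to mirror the structure of the proof of Theorem~\ref{th2}. The direction (1) $\Rightarrow$ (2) is immediate from Theorem~\ref{th3}: (R1) is exactly part~(i), and (R3) is exactly part~(v). For (R2), if $L(x,z)\leq y$ then $z$ belongs to the set $\{w\in P\mid L(x,w)\leq y\}$, so by the fact recorded in the Preliminaries that $A\leq_1\Max B$ whenever $A\subseteq B$, we obtain $z\leq_1\Max\{w\in P\mid L(x,w)\leq y\}=x\rightarrow y$.

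For the harder direction (2) $\Rightarrow$ (1), the key lemma I would establish first is that, under (R1)--(R3), the condition $L(x,z)\leq y$ is equivalent to $z\leq_1 x\rightarrow y$. One implication is just (R2). For the converse, suppose $z\leq_1 x\rightarrow y$; then there exists some $w\in x\rightarrow y$ with $z\leq w$, whence
\[
L(x,z)\subseteq L(x,w)\subseteq\Lambda(x,x\rightarrow y)=L(x,y)\leq y
\]
using (R3) for the middle equality, and this gives $L(x,z)\leq y$ as desired.

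Once this equivalence is in hand, I would compute
\[
\Max\{z\in P\mid L(x,z)\leq y\}=\Max\{z\in P\mid z\leq_1 x\rightarrow y\}=x\rightarrow y,
\]
where the second equality is the assertion that for the antichain $A:=x\rightarrow y$ (an antichain by (R1)) one has $\Max\{z\in P\mid z\leq_1 A\}=A$. This last step is verified exactly as the corresponding final paragraph of the proof of Theorem~\ref{th2}: every $u\in A$ satisfies $u\leq_1 A$ and cannot lie strictly below a larger element of $\{z\mid z\leq_1 A\}$ without contradicting the antichain property, so $u$ is maximal; and conversely any maximal element $z$ of $\{z\mid z\leq_1 A\}$ lies below some $u\in A$, forcing $z=u\in A$ again by maximality.

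The genuinely load-bearing hypothesis is (R3): it is what converts the purely relational condition $z\leq_1 x\rightarrow y$ back into the defining inequality $L(x,z)\leq y$. I do not expect a serious obstacle, since the argument runs parallel to Theorem~\ref{th2}; the only point requiring care is the bookkeeping with the quasiorder $\leq_1$ versus $\leq$ in the final $\Max$ computation, and this is handled verbatim by the antichain argument already used there, so no new idea is required.
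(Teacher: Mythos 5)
Your proposal is correct and follows essentially the same route as the paper: the identical key equivalence ($L(x,z)\leq y$ if and only if $z\leq_1 x\rightarrow y$, via (R2) and (R3)), followed by the same computation $\Max\{z\in P\mid z\leq_1 x\rightarrow y\}=x\rightarrow y$ using the antichain property (R1). The only difference is that you spell out (R2) in the direction (1) $\Rightarrow$ (2) and insert the intermediate inclusion $L(x,z)\subseteq L(x,w)$ explicitly, both of which the paper leaves implicit.
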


\begin{proof}
$\text{}$ \\
(1) $\Rightarrow$ (2): \\
This follows from Theorem~\ref{th3}. \\
(2) $\Rightarrow$ (1): \\
According to (R2) we have that $L(x,z)\leq y$ implies $z\leq_1x\rightarrow y$. Conversely, if $z\leq_1x\rightarrow y$ then by (R3) we obtain
\[
L(x,z)\subseteq\Lambda(x,x\rightarrow y)=L(x,y)\leq y
\]
and hence $L(x,z)\leq y$. This shows that $L(x,z)\leq y$ is equivalent to $z\leq_1x\rightarrow y$. We conclude
\[
\Max\{z\in S\mid L(x,z)\leq y\}=\Max\{z\in S\mid z\leq_1x\rightarrow y\}=x\rightarrow y.
\]
The last equality can be seen as follows. Let $u\in\Max\{z\in S\mid z\leq_1x\rightarrow y\}$. Then $u\leq_1x\rightarrow y$, hence there exists some $v\in x\rightarrow y$ with $u\leq v$. We have $v\leq_1x\rightarrow y$. Then $u<v$ would imply $u\notin\Max\{z\in S\mid z\leq_1x\rightarrow y\}$, a contradiction. This shows $u=v\in x\rightarrow y$. Conversely, assume $u\in x\rightarrow y$. Then $u\leq_1x\rightarrow y$. If $u\notin\Max\{z\in S\mid z\leq_1x\rightarrow y\}$ then there would exist some $v\in S$ with $u<v\leq_1x\rightarrow y$ and hence there would exist some $w\in x\rightarrow y$ with $u<v\leq w$ contradicting (R1). This shows $u\in\Max\{z\in S\mid z\leq_1x\rightarrow y\}$.
\end{proof}

Let us note that using both (R2) and (R3) we can prove the adjointness (AD) mentioned above.

\begin{remark}
It is of some interest that the unsharp operator $\rightarrow$ can be characterized by three exact and simple conditions in the language of posets equipped with the operator $L$ of the lower cone.
\end{remark}

\section{Deductive systems}

As mentioned in Section~4, we can involve in our logic the derivation rule Modus Ponens. This rule is in fact closely related to the concept of a deductive system. Hence we define:

\begin{definition}\label{def1}
Let $\mathbf P=(P,\leq,0,1)$ be a bounded poset satisfying the {\rm ACC} and $\rightarrow$ defined by {\rm(I)}.
A {\em deductive system} of $\mathbf P$ is a subset $D$ of $2^P\setminus\{\emptyset\}$ satisfying the following conditions:
\begin{enumerate}[{\rm(i)}]
\item $1\in D$,
\item if $x,y,z,u\in P$, $x\rightarrow y\in D$ and $(x\rightarrow y)\rightarrow(z\rightarrow u)\in D$ then $z\rightarrow u\in D$,
\item if $x,y,z\in P$ and $x\rightarrow y,y\rightarrow x\in D$ then $(z\rightarrow x)\rightarrow(z\rightarrow y)\in D$ and $(x\rightarrow z)\rightarrow(y\rightarrow z)\in D$.
\end{enumerate}
\end{definition}

In the following we make use of the identities $x\odot1\approx1\odot x\approx x$ and $1\rightarrow x\approx x$ (see Section~4).

\begin{remark}
If $x\in D$, $y\in P$ and $x\rightarrow y\in D$ then according to {\rm(vii)} of Theorem~\ref{th3} we have $1\rightarrow x=x\in D$ and $(1\rightarrow x)\rightarrow(1\rightarrow y)=x\rightarrow y\in D$ and hence because of {\rm(ii)} of Definition~\ref{def1} we get $y=1\rightarrow y\in D$. Therefore, $x\in D$ and $x\rightarrow y\in D$ imply $y\in D$ which justifies the name ``deductive system'' and illuminates the connection such systems with the derivation rule Modus Ponens.
\end{remark}

\begin{example}\label{ex1}
Consider the bounded poset $\mathbf P=(P,\leq,0,1)$ depicted in Fig.~4:

\vspace*{-3mm}

\begin{center}
\setlength{\unitlength}{7mm}
\begin{picture}(4,10)
\put(2,1){\circle*{.3}}
\put(1,3){\circle*{.3}}
\put(3,3){\circle*{.3}}
\put(1,5){\circle*{.3}}
\put(1,7){\circle*{.3}}
\put(3,7){\circle*{.3}}
\put(2,9){\circle*{.3}}
\put(2,1){\line(-1,2)1}
\put(2,1){\line(1,2)1}
\put(1,3){\line(0,1)4}
\put(1,3){\line(1,2)2}
\put(3,3){\line(-1,2)2}
\put(3,3){\line(0,1)4}
\put(2,9){\line(-1,-2)1}
\put(2,9){\line(1,-2)1}
\put(1.85,.3){$0$}
\put(.35,2.85){$a$}
\put(3.4,2.85){$b$}
\put(.35,4.85){$c$}
\put(.35,6.85){$d$}
\put(3.4,6.85){$e$}
\put(1.85,9.4){$1$}
\put(1.2,-.75){{\rm Fig.~4}}
\put(.1,-1.75){Bounded poset}
\end{picture}
\end{center}

\vspace*{8mm}

The operator table for $\rightarrow$ is as follows:
\[
\begin{array}{r|r|r|r|r|r|r|r}
\rightarrow & 0 & a & b & c & d & e & 1 \\
\hline
          0 & 1 & 1 & 1 & 1 & 1 & 1 & 1 \\
\hline
          a & b & 1 & b & 1 & 1 & 1 & 1\\
\hline
          b & c & c & 1 & c & 1 & 1 & 1 \\
\hline
          c & b & e & b & 1 & 1 & e & 1 \\
\hline
          d & 0 & a & b & c & 1 & e & 1 \\
\hline
          e & 0 & c & b & c & d & 1 & 1 \\
\hline
          1 & 0 & a & b & c & d & e & 1
\end{array}
\]
We want to show that $D:=\{d,e,1\}$ is a deductive system of $\mathbf P$. {\rm(}The following considerations make heavy use of the operation table for $\rightarrow$.{\rm)} In order to make the proof of this statement short and clear we define two subsets $A$ and $B$ of $P$ as follows:
\begin{align*}
A & :=\{(c,a),(c,e)\}\cup D^2\cup\{(x,y)\in P^2\mid x\leq y\}, \\
B & :=\{a,c\}^2\cup D^2\cup\{(x,x)\mid x\in P\}.
\end{align*}
Now let $x,y,z\in P$. Then we have
\begin{align*}
& x\rightarrow y\in D\text{ if and only if }(x,y)\in A, \\
& x\rightarrow y,y\rightarrow x\in D\text{ if and only if }(x,y)\in B, \\
& 1\in D\text{ according to the definition of }D, \\
& \text{if }x\in D\text{ and }(x,y)\in A\text{ then }y\in D, \\
& \text{if }(x,y)\in B\text{ then }(z\rightarrow x,z\rightarrow y),(x\rightarrow z,y\rightarrow z)\in B
\end{align*}
completing the proof that $D$ is a deductive system of $\mathbf P$.
\end{example}

Since in posets we do not have everywhere defined lattice operations join and meet, we formulate a certain kind of compatibility in the following way.

\begin{definition}
Let $(P,\leq,0,1)$ be a bounded poset satisfying the {\rm ACC} and $\Phi$ be an equivalence relation on $2^P\setminus\{\emptyset\}$. We say that $\Phi$ satisfies the {\em substitution property with respect to $\odot$} if for all $x,y\in P$ we have
\[
1\mathrel{\Phi}x\rightarrow y\hspace*{2mm}\text{ implies }\hspace*{2mm}x\odot1\mathrel{\Phi}x\odot(x\rightarrow y).
\]
We say that $\Phi$ satisfies the {\em substitution property with respect to $\rightarrow$} if for all $x,y,z,u\in P$ we have
\begin{align*}
            x\mathrel{\Phi} y & \text{ implies }x\rightarrow x\mathrel{\Phi}x\rightarrow y, \\
            x\mathrel{\Phi} y & \text{ implies }(z\rightarrow x)\rightarrow(z\rightarrow x)\mathrel{\Phi}(z\rightarrow x)\rightarrow(z\rightarrow y), \\
            x\mathrel{\Phi} y & \text{ implies }(x\rightarrow z)\rightarrow(x\rightarrow z)\mathrel{\Phi}(x\rightarrow z)\rightarrow(y\rightarrow z), \\
1\mathrel{\Phi}x\rightarrow y & \text{ implies }1\rightarrow(z\rightarrow u)\mathrel{\Phi}(x\rightarrow y)\rightarrow(z\rightarrow u).
\end{align*}
\end{definition}

If an equivalence relation on $2^P\setminus\{\emptyset\}$ satisfies the substitution property with respect to $\odot$ and $\rightarrow$ then we can easily describe the relationship to its kernel.

\begin{lemma}\label{lem1}
Let $(P,\leq,0,1)$ be a bounded poset satisfying the {\rm ACC}, $\Phi$ an equivalence relation on $2^P\setminus\{\emptyset\}$ satisfying the substitution property with respect to $\odot$ and $\rightarrow$ and $a,b\in P$. Then $(a,b)\in\Phi$ if and only if $a\rightarrow b,b\rightarrow a\in[1]\Phi$.
\end{lemma}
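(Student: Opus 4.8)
The plan is to prove the two implications separately, using the first clause of the substitution property for $\rightarrow$ in the forward direction and the substitution property for $\odot$ (together with Theorem~\ref{th3}) in the backward direction. Throughout I shall use that $x\rightarrow x=1$ by Theorem~\ref{th3}(viii) (since $x\leq x$), that $x\odot1=x$, and that $\odot$ is commutative, all recorded in Section~4; recall also that $a\rightarrow b\in[1]\Phi$ means exactly $1\mathrel\Phi a\rightarrow b$.

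For the forward direction, suppose $(a,b)\in\Phi$, i.e.\ $a\mathrel\Phi b$. Applying the first clause of the substitution property with respect to $\rightarrow$ (with $x:=a$, $y:=b$) yields $a\rightarrow a\mathrel\Phi a\rightarrow b$. Since $a\rightarrow a=1$, this says $1\mathrel\Phi a\rightarrow b$, i.e.\ $a\rightarrow b\in[1]\Phi$. As $\Phi$ is symmetric we also have $b\mathrel\Phi a$, and the same argument with the roles of $a$ and $b$ interchanged gives $b\rightarrow a\in[1]\Phi$.

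For the backward direction, suppose $a\rightarrow b,b\rightarrow a\in[1]\Phi$, i.e.\ $1\mathrel\Phi a\rightarrow b$ and $1\mathrel\Phi b\rightarrow a$. The substitution property with respect to $\odot$ (with $x:=a$, $y:=b$) turns the first of these into $a\odot1\mathrel\Phi a\odot(a\rightarrow b)$. Here $a\odot1=a$ and, by Theorem~\ref{th3}(x), $a\odot(a\rightarrow b)=a\odot b$, so the relation reads $a\mathrel\Phi a\odot b$. Symmetrically, from $1\mathrel\Phi b\rightarrow a$ I obtain $b\mathrel\Phi b\odot a$. Commutativity of $\odot$ gives $a\odot b=b\odot a$, and chaining by the symmetry and transitivity of $\Phi$ yields $a\mathrel\Phi a\odot b=b\odot a\mathrel\Phi b$, hence $(a,b)\in\Phi$.

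I expect no serious obstacle here: the argument is a short chase through the two substitution properties. The one point that must not be overlooked is that $a\odot b$ is in general a genuine subset of $P$ rather than a single element, so the equivalence $\Phi$ on $2^P\setminus\{\emptyset\}$ is really being used between a singleton and a possibly non-singleton set; the identity $a\odot(a\rightarrow b)=a\odot b$ from Theorem~\ref{th3}(x) is precisely what bridges the hypothesis $1\mathrel\Phi a\rightarrow b$ to the desired relation between the elements $a$ and $b$.
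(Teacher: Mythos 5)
Your proof is correct and follows essentially the same route as the paper's: the forward direction via the first clause of the substitution property for $\rightarrow$ together with $a\rightarrow a=1$, and the backward direction via the substitution property for $\odot$, Theorem~\ref{th3}(x), commutativity of $\odot$, and the chain $a=a\odot1\mathrel{\Phi}a\odot(a\rightarrow b)=a\odot b=b\odot a=b\odot(b\rightarrow a)\mathrel{\Phi}b\odot1=b$. Your closing remark about $\Phi$ relating a singleton to a genuine subset of $P$ is a sound observation and exactly why the lemma is stated for equivalences on $2^P\setminus\{\emptyset\}$.
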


\begin{proof}
If $(a,b)\in\Phi$ then
\begin{align*}
a\rightarrow b\in[a\rightarrow a]\Phi=[1]\Phi, \\
b\rightarrow a\in[b\rightarrow b]\Phi=[1]\Phi.
\end{align*}
If, conversely, $a\rightarrow b,b\rightarrow a\in[1]\Phi$ then according to (x) of Theorem~\ref{th3} we have
\[
a=a\odot1\mathrel{\Phi}a\odot(a\rightarrow b)=a\odot b=b\odot a=b\odot(b\rightarrow a)\mathrel{\Phi}b\odot1=b.
\]
\end{proof}

When studying congruences in varieties of algebras, an important congruence property is the so-called weak regularity. It means that if an algebra in question has a constant $1$ and if for two of its congruences $\Phi$ and $\Psi$ we have $[1]\Phi=[1]\Psi$ then $\Phi=\Psi$, see e.g.\ \cite{CEL}. Surprisingly, we obtain a similar result for posets and equivalence relations having the substitution property with respect to $\odot$ and $\rightarrow$. In fact, this is a consequence of Lemma~\ref{lem1}.

\begin{corollary}
If $(P,\leq,0,1)$ is a bounded poset satisfying the {\rm ACC}, $\Phi,\Psi$ are equivalence relations on $2^P\setminus\{\emptyset\}$ satisfying the substitution property with respect to $\odot$ and $\rightarrow$ and $[1]\Phi=[1]\Psi$ then $\Phi\cap P^2=\Psi\cap P^2$.
\end{corollary}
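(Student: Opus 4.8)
The plan is to reduce the statement entirely to Lemma~\ref{lem1}, which already packages the decisive equivalence, and then feed in the hypothesis $[1]\Phi=[1]\Psi$. First I would fix an arbitrary pair $(a,b)\in P^2$ and note that proving $\Phi\cap P^2=\Psi\cap P^2$ amounts to establishing $(a,b)\in\Phi$ if and only if $(a,b)\in\Psi$ for every such pair, since both relations are being restricted to pairs of elements of $P$ (where, as throughout, a singleton is identified with its element).

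Since $\Phi$ and $\Psi$ are each assumed to satisfy the substitution property with respect to $\odot$ and $\rightarrow$, Lemma~\ref{lem1} applies to each of them separately. Applying it to $\Phi$ gives that $(a,b)\in\Phi$ is equivalent to $a\rightarrow b,b\rightarrow a\in[1]\Phi$, and applying it to $\Psi$ gives that $(a,b)\in\Psi$ is equivalent to $a\rightarrow b,b\rightarrow a\in[1]\Psi$. Here the objects $a\rightarrow b$ and $b\rightarrow a$ are elements of $2^P\setminus\{\emptyset\}$, so membership in the class $[1]\Phi\subseteq2^P\setminus\{\emptyset\}$ is meaningful, and likewise for $[1]\Psi$.

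The final step invokes the hypothesis $[1]\Phi=[1]\Psi$ directly: the two membership conditions appearing on the right-hand sides of the two biconditionals now read identically, so the biconditionals share the very same right-hand side. This forces $(a,b)\in\Phi$ to be equivalent to $(a,b)\in\Psi$, and as $(a,b)\in P^2$ was arbitrary we conclude $\Phi\cap P^2=\Psi\cap P^2$.

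I do not anticipate a genuine obstacle here: the whole content is carried by Lemma~\ref{lem1}, and this corollary is simply its specialization to two equivalence relations sharing a kernel. The one point deserving a moment's care is that the ``kernel'' $[1]\Phi$ is the class of the top element taken \emph{inside} $2^P\setminus\{\emptyset\}$, not inside $P$, so that the assumed equality of kernels is equality of these subsets of $2^P\setminus\{\emptyset\}$; once this is made explicit, the argument is purely formal and requires no further computation.
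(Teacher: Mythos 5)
Your proof is correct and takes essentially the same route as the paper: the paper offers no written proof at all, remarking only that the corollary ``is a consequence of Lemma~\ref{lem1},'' and your argument is precisely that deduction --- apply the lemma separately to $\Phi$ and $\Psi$ for an arbitrary pair $(a,b)\in P^2$, then use $[1]\Phi=[1]\Psi$ to identify the two equivalent membership conditions. Your closing remark about the kernel living in $2^P\setminus\{\emptyset\}$ rather than $P$ is also the right point of care, and nothing is missing.
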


For varieties of algebras, the mentioned weak regularity was characterized by B.~Cs\'ak\'any, see e.g.\ \cite{CEL} by means of certain binary terms satisfying a simple condition. It is of some interest that such a binary term can be derived also for posets.

\begin{proposition}
Let $(P,\leq,0,1)$ be a bounded poset satisfying the {\rm ACC}, put $t(x,y):=(x\rightarrow y)\odot(y\rightarrow x)$ for all $x,y\in P$ and let $a,b\in P$. Then $t(a,b)=1$ if and only if $a=b$.
\end{proposition}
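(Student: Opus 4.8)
The plan is to prove the two implications separately, reading $t(a,b)=1$ as the assertion that the subset $(a\rightarrow b)\odot(b\rightarrow a)$ equals the singleton $\{1\}$. The forward implication is immediate: if $a=b$ then, since $a\le a$, part (viii) of Theorem~\ref{th3} gives $a\rightarrow b=a\rightarrow a=1$ and likewise $b\rightarrow a=1$, so $t(a,b)=1\odot1=1$ by the identity $x\odot1\approx x$ recorded in Section~4.

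For the reverse implication I would show that $t(a,b)=1$ forces each of the two factors $a\rightarrow b$ and $b\rightarrow a$ to be the top element. Unravelling the definition of $\odot$, the hypothesis reads $\Max\Lambda(a\rightarrow b,b\rightarrow a)=\{1\}$; since the set of maximal elements is contained in the set itself, this yields $1\in\Lambda(a\rightarrow b,b\rightarrow a)$. By the definition of $\Lambda$ there are $x\in a\rightarrow b$ and $y\in b\rightarrow a$ with $1\in L(x,y)$, that is $1\le x$ and $1\le y$, and because $1$ is the top element this forces $x=y=1$. Hence $1\in a\rightarrow b$ and $1\in b\rightarrow a$. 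Now I invoke part (i) of Theorem~\ref{th3}, that $a\rightarrow b$ is an antichain: any further $z\in a\rightarrow b$ satisfies $z\le1$, so incomparability with $1$ forces $z=1$, whence $a\rightarrow b=1$; the same reasoning gives $b\rightarrow a=1$. Finally part (viii) of Theorem~\ref{th3} converts these into $a\le b$ and $b\le a$, so $a=b$.

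The only delicate point, and the step I would flag as the main obstacle, is the passage from $\Max\Lambda(a\rightarrow b,b\rightarrow a)=\{1\}$ to the memberships $1\in a\rightarrow b$ and $1\in b\rightarrow a$; everything else is bookkeeping with results already established. Its crux is the elementary but essential observation that $1$ can lie in a lower cone $L(x,y)$ only when both $x$ and $y$ equal $1$, which is precisely what allows one to recover information about the individual factors from information about their $\odot$-product. One should also confirm that $\Max$ of a subset of $P$ containing $1$ is indeed $\{1\}$, which holds because $1$ dominates every element, so that no spurious maximal elements can interfere with the argument.
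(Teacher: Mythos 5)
Your proposal is correct and follows essentially the same route as the paper's proof: unravel $\odot$ via $\Max\Lambda$, use the key observation that $1\in L(x,y)$ forces $x=y=1$, conclude $a\rightarrow b=b\rightarrow a=1$, and finish with (viii) of Theorem~\ref{th3}. The paper merely packages both directions as a single chain of equivalences (with the antichain/maximality point you flag handled implicitly in the step from $1\in(a\rightarrow b)\cap(b\rightarrow a)$ to $a\rightarrow b=b\rightarrow a=1$), whereas you argue the two implications separately.
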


\begin{proof}
According to (viii) of Theorem~\ref{th3} the following are equivalent:
\begin{align*}
                                                                                        t(a,b) & =1, \\
                                                         (a\rightarrow b)\odot(b\rightarrow a) & =1, \\
                                                    \Max\Lambda(a\rightarrow b,b\rightarrow a) & =1, \\
                                                                                             1 & \in\Lambda(a\rightarrow b,b\rightarrow a), \\
\text{there exists some }x\in a\rightarrow b\text{ and some }y\in b\rightarrow a\text{ with }1 & \in L(x,y), \\
\text{there exists some }x\in a\rightarrow b\text{ and some }y\in b\rightarrow a\text{ with }x & =y=1, \\
                                                                                             1 & \in(a\rightarrow b)\cap(b\rightarrow a), \\
                                                                                a\rightarrow b & =b\rightarrow a=1, \\
                                                                                             a & \leq b\leq a, \\
                                                                                             a & =b.
\end{align*}
\end{proof}

We are going to show that the kernel of an equivalence relation satisfying the substitution property with respect to $\odot$ and $\rightarrow$ is just a deductive system.

\begin{theorem}\label{th5}
Let $\mathbf P=(P,\leq,0,1)$ be a bounded poset satisfying the {\rm ACC} and $\Phi$ an equivalence relation on $2^P\setminus\{\emptyset\}$ satisfying the substitution property with respect to $\odot$ and $\rightarrow$. Then $[1]\Phi$ is a deductive system of $\mathbf P$.
\end{theorem}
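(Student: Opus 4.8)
The plan is to verify directly the three defining conditions (i)--(iii) of a deductive system (Definition~\ref{def1}) for the set $D:=[1]\Phi$. Condition~(i) is immediate: since $\Phi$ is reflexive we have $1\mathrel\Phi1$, so $1\in[1]\Phi$.

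For condition~(ii) I would start from the hypotheses $x\rightarrow y\in[1]\Phi$ and $(x\rightarrow y)\rightarrow(z\rightarrow u)\in[1]\Phi$, i.e.\ $1\mathrel\Phi x\rightarrow y$ and $1\mathrel\Phi(x\rightarrow y)\rightarrow(z\rightarrow u)$. The key tool is the fourth clause of the substitution property with respect to $\rightarrow$, which turns $1\mathrel\Phi x\rightarrow y$ into
\[
1\rightarrow(z\rightarrow u)\mathrel\Phi(x\rightarrow y)\rightarrow(z\rightarrow u).
\]
Using the identity $1\rightarrow A=A$ (clause~(vii) of Theorem~\ref{th3}, applied to the subset $z\rightarrow u$) the left-hand side simplifies to $z\rightarrow u$. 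Combining this with the assumed $(x\rightarrow y)\rightarrow(z\rightarrow u)\mathrel\Phi1$ and the transitivity and symmetry of $\Phi$ yields $z\rightarrow u\mathrel\Phi1$, i.e.\ $z\rightarrow u\in[1]\Phi$, as required.

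For condition~(iii) the plan is first to translate the hypothesis using Lemma~\ref{lem1}: since $x\rightarrow y,\,y\rightarrow x\in[1]\Phi$, that lemma gives $(x,y)\in\Phi$, i.e.\ $x\mathrel\Phi y$. Now I apply the second and third clauses of the substitution property with respect to $\rightarrow$, obtaining
\[
(z\rightarrow x)\rightarrow(z\rightarrow x)\mathrel\Phi(z\rightarrow x)\rightarrow(z\rightarrow y)
\]
and the analogous statement in the first argument. The decisive observation is that the left-hand terms collapse: by clause~(viii) of Theorem~\ref{th3}, applied with two equal subsets so that $A\leq_1A$ holds trivially, we have $(z\rightarrow x)\rightarrow(z\rightarrow x)=1$ and likewise $(x\rightarrow z)\rightarrow(x\rightarrow z)=1$. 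Hence $1\mathrel\Phi(z\rightarrow x)\rightarrow(z\rightarrow y)$ and $1\mathrel\Phi(x\rightarrow z)\rightarrow(y\rightarrow z)$, placing both elements in $[1]\Phi$.

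The main obstacle is not any hard computation but rather the bookkeeping of recognizing that the various compound $\rightarrow$-terms produced by the substitution property reduce, via the already-established identities $1\rightarrow A=A$ and $A\rightarrow A=1$, either to $1$ or to a simpler subset; once these reductions are in place, each condition follows from a short chain of $\Phi$-steps. I would also take care that Lemma~\ref{lem1}, which itself relies on the substitution property with respect to $\odot$, is genuinely available here, so that condition~(iii) can be reduced to the element-level relation $x\mathrel\Phi y$ before the $\rightarrow$-substitution clauses are invoked.
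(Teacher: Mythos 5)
Your proposal is correct and follows essentially the same route as the paper's own proof: condition (ii) via the fourth clause of the $\rightarrow$-substitution property together with $1\rightarrow A=A$ (Theorem~\ref{th3}(vii)), and condition (iii) via Lemma~\ref{lem1} followed by the second and third clauses together with $A\rightarrow A=1$ (Theorem~\ref{th3}(viii)). Your added remark that Lemma~\ref{lem1} needs the $\odot$-substitution property, which the theorem's hypothesis indeed supplies, is a correct and worthwhile clarification of a point the paper leaves implicit.
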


\begin{proof}
Let $a,b,c,d\in P$.
\begin{enumerate}[(i)]
\item This is clear.
\item If $a\rightarrow b,(a\rightarrow b)\rightarrow(c\rightarrow d)\in[1]\Phi$ then according to (vii) of Theorem~\ref{th3} we have
\[
c\rightarrow d=1\rightarrow(c\rightarrow d)\in[(a\rightarrow b)\rightarrow(c\rightarrow d)]\Phi=[1]\Phi.
\]
\item If $a\rightarrow b,b\rightarrow a\in[1]\Phi$ then $(a,b)\in\Phi$ according to Lemma~\ref{lem1} and hence
\begin{align*}
(c\rightarrow a)\rightarrow(c\rightarrow b) & \in[(c\rightarrow a)\rightarrow(c\rightarrow a)]\Phi=[1]\Phi, \\
(a\rightarrow c)\rightarrow(b\rightarrow c) & \in[(a\rightarrow c)\rightarrow(a\rightarrow c)]\Phi=[1]\Phi.
\end{align*}
\end{enumerate}
\end{proof}

The question if a given deductive system $D$ on a bounded poset $(P,\leq,0,1)$ satisfying the ACC induces an equivalence relation on $2^P\setminus\{\emptyset\}$ with kernel $D\cap P$ having a property similar to the substitution property with respect to $\rightarrow$ is answered in the next result. At first, we define the relation induced by $D$.

\begin{definition}
For every bounded poset $(P,\leq,0,1)$ satisfying the {\rm ACC} and every subset $E$ of $2^P\setminus\{\emptyset\}$ define a binary relation $\Theta(E)$ on $2^P\setminus\{\emptyset\}$ as follows:
\[
(A,B)\in\Theta(E)\text{ if and only if }A\rightarrow B,B\rightarrow A\in E
\]
{\rm(}$A,B$ non-empty subsets of $P${\rm)}. We call $\Theta(E)$ the {\em relation induced} by $E$.
\end{definition}

In Theorem~\ref{th5} we proved that for every equivalence relation $\Phi$ on $2^P\setminus\{\emptyset\}$ satisfying the substitution property with respect to $\odot$ and $\rightarrow$, its kernel $[1]\Phi$ is a deductive system of $\mathbf P$. The next theorem shows that there holds some converse version of this result if we consider the restriction of the relation induced by a deductive system of $\mathbf P$ to the base set $P$.

\begin{theorem}
Let $\mathbf P=(P,\leq,0,1)$ be a bounded poset satisfying the {\rm ACC} and $D$ a deductive system of $\mathbf P$. Then the following hold:
\begin{enumerate}[{\rm(i)}]
\item $\Theta(D)\cap P^2$ is an equivalence relation on $P$,
\item $[1]\big(\Theta(D)\cap P^2\big)=D\cap P$,
\item if $x,y,z\in P$ and $(x,y)\in\Theta(D)$ then $(z\rightarrow x,z\rightarrow y)\in\Theta(D)$ and $(x\rightarrow z,y\rightarrow z)\in\Theta(D)$.
\end{enumerate}
\end{theorem}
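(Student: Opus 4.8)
The plan is to verify the three items separately, recognising at the outset that reflexivity and symmetry in part (i), together with the whole of part (ii), are immediate from the basic properties of $\rightarrow$ recorded in Theorem~\ref{th3}, while the only genuine work lies in establishing transitivity in part (i). For reflexivity I would note that for $a\in P$ property (viii) of Theorem~\ref{th3} gives $a\rightarrow a=1$, and since $1\in D$ by condition (i) of Definition~\ref{def1}, we get $(a,a)\in\Theta(D)$. Symmetry is trivial, since the defining condition ``$a\rightarrow b,b\rightarrow a\in D$'' for $(a,b)\in\Theta(D)$ is itself symmetric in $a$ and $b$. For part (ii) I would unwind the class of $1$: a pair $(1,a)$ with $a\in P$ lies in $\Theta(D)$ exactly when $1\rightarrow a,a\rightarrow 1\in D$; but (vii) of Theorem~\ref{th3} gives $1\rightarrow a=a$, and (viii) gives $a\rightarrow 1=1\in D$ automatically, so the condition collapses to $a\in D$. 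Hence $[1]\big(\Theta(D)\cap P^2\big)=D\cap P$.

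The transitivity step is the main obstacle and the heart of the argument, because it is the one place where conditions (ii) and (iii) of Definition~\ref{def1} must be played off against each other. Suppose $(a,b),(b,c)\in\Theta(D)$, so that $a\rightarrow b,b\rightarrow a,b\rightarrow c,c\rightarrow b\in D$; I must produce $a\rightarrow c$ and $c\rightarrow a$ in $D$. The key move is to apply condition (iii) to the pair $(b,a)$ (legitimate since $b\rightarrow a,a\rightarrow b\in D$) with auxiliary element $z=c$, which yields $(b\rightarrow c)\rightarrow(a\rightarrow c)\in D$ and $(c\rightarrow b)\rightarrow(c\rightarrow a)\in D$. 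I would then combine the first of these with $b\rightarrow c\in D$ through condition (ii) (taking $x=b,y=c,z=a,u=c$) to obtain $a\rightarrow c\in D$, and combine the second with $c\rightarrow b\in D$ through condition (ii) (taking $x=c,y=b,z=c,u=a$) to obtain $c\rightarrow a\in D$; thus $(a,c)\in\Theta(D)$.

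The subtle point to get right in this step is the \emph{orientation} of the implications generated by condition (iii): one must feed in the pair in the order $(b,a)$ rather than $(a,b)$, precisely so that the produced implications run \emph{from} the hypotheses $b\rightarrow c$ and $c\rightarrow b$ (which already lie in $D$) \emph{toward} the targets $a\rightarrow c$ and $c\rightarrow a$, allowing condition (ii) to act as a Modus Ponens and detach the conclusions. Reversing the pair would leave the implications pointing the wrong way and the argument would stall.

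Part (iii) should then follow almost directly from condition (iii) of Definition~\ref{def1}, once one keeps in mind that $\Theta(D)$ is defined on subsets and that here $z\rightarrow x,z\rightarrow y$, etc.\ are exactly such subsets. Given $(x,y)\in\Theta(D)$, i.e.\ $x\rightarrow y,y\rightarrow x\in D$, applying condition (iii) to the pair $(x,y)$ gives $(z\rightarrow x)\rightarrow(z\rightarrow y)\in D$ and $(x\rightarrow z)\rightarrow(y\rightarrow z)\in D$, while applying it to $(y,x)$ gives the reversed memberships $(z\rightarrow y)\rightarrow(z\rightarrow x)\in D$ and $(y\rightarrow z)\rightarrow(x\rightarrow z)\in D$. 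By the very definition of $\Theta(D)$ these four facts say precisely $(z\rightarrow x,z\rightarrow y)\in\Theta(D)$ and $(x\rightarrow z,y\rightarrow z)\in\Theta(D)$, which completes the proof.
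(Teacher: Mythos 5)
Your proposal is correct and follows essentially the same route as the paper's own proof: transitivity is obtained by instantiating condition (iii) of Definition~\ref{def1} at the pair $(b,a)$ with auxiliary element $c$ and then detaching $a\rightarrow c$ and $c\rightarrow a$ via condition (ii), while parts (ii) and (iii) collapse to Theorem~\ref{th3}(vii)--(viii) and Definition~\ref{def1}(iii) exactly as in the paper. Your remarks on the orientation of the implications simply make explicit what the paper leaves implicit in its terse listing of the four memberships.
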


\begin{proof}
Let $a,b,c\in P$.
\begin{enumerate}[(i)]
\item Evidently, $\Theta(D)$ is reflexive and symmetric. If $(a,b),(b,c)\in\Theta(D)$ then
\[
b\rightarrow c,(b\rightarrow c)\rightarrow(a\rightarrow c),c\rightarrow b,(c\rightarrow b)\rightarrow(c\rightarrow a)\in D
\]
and hence $a\rightarrow c,c\rightarrow a\in D$, i.e.\ $(a,c)\in\Theta(D)$. This shows that $\Theta(D)$ is transitive and therefore an equivalence relation on $P$.
\item The following are equivalent: $a\in[1]\big(\Theta(D)\big)$; $a\rightarrow1,1\rightarrow a\in D$; $1,a\in D$; $a\in D$.
\item follows from Definition~\ref{def1}.
\end{enumerate}
\end{proof}

\begin{example}
For the deductive system $D$ from Example~\ref{ex1} we have
\[
\Theta(D)=\{a,c\}^2\cup D^2\cup\{(x,x)\mid x\in P\}.
\]
\end{example}

Authors' addresses:

Ivan Chajda \\
Palack\'y University Olomouc \\
Faculty of Science \\
Department of Algebra and Geometry \\
17.\ listopadu 12 \\
771 46 Olomouc \\
Czech Republic \\
ivan.chajda@upol.cz

Helmut L\"anger \\
TU Wien \\
Faculty of Mathematics and Geoinformation \\
Institute of Discrete Mathematics and Geometry \\
Wiedner Hauptstra\ss e 8-10 \\
1040 Vienna \\
Austria, and \\
Palack\'y University Olomouc \\
Faculty of Science \\
Department of Algebra and Geometry \\
17.\ listopadu 12 \\
771 46 Olomouc \\
Czech Republic \\
helmut.laenger@tuwien.ac.at

\end{document}